\numberwithin{equation}{section}
\numberwithin{figure}{section}
\theoremstyle{plain}
\newtheorem{thm}{Theorem}[section]
\theoremstyle{plain}
\theoremstyle{remark}
\newtheorem{rem}[thm]{Remark}
\theoremstyle{plain}
\theoremstyle{plain}
\theoremstyle{plain}
\newtheorem{lem}[thm]{Lemma}
\theoremstyle{definition}
\theoremstyle{definition}
\theoremstyle{definition}
\newcommand{\real}{\mathbb{R}}
\newcommand{\A}{\mathcal{A}}
\newcommand{\E}{\mathcal{E}}
\newcommand{\sgn}{\text{sgn}}
\newcommand{\tr}{\textnormal{Tr}}
\begin{document}
\title{Optimal 2-uniform convexity of Schatten classes revisited}
\author{Haonan Zhang}
\maketitle

\begin{abstract}

The optimal 2-uniform convexity of Schatten classes $S_p, 1<p\le 2$ was first proved by Ball, Carlen and Lieb \cite{BCL94}. In this note we revisit this result using multiple operator integrals and generalized monotone metrics in quantum information theory.
\end{abstract}

\section{Introduction}
A normed space $\left(X,\|\cdot\|\right)$ is said to be $r$-uniformly convex for some $r\in [2,\infty)$ if
\begin{equation}\label{ineq:defn of uniform convexity}
\|a+b\|^r+\|a-b\|^r\ge 2\|a\|^r+2\|K^{-1}b\|^r
\end{equation}
for some $K>0$ and for all $a,b\in X$, and is said to be $s$-uniformly smooth for some $s\in (1,2]$ if 
\begin{equation}\label{ineq:defn of uniform smoothness}
\|a+b\|^s+\|a-b\|^s\le 2\|a\|^s+2\|Kb\|^s
\end{equation}
for some $K>0$ and for all $a,b\in X$. The best constant $K$ in \eqref{ineq:defn of uniform convexity} for $X$ coincides with the best constant $K$ in \eqref{ineq:defn of uniform smoothness} for $X^*$, the dual of $X$, when $1/r+1/s=1$. See \cite{BCL94} for details.

The classical $L_p$-spaces, $1< p\le 2$, are 2-uniformly convex with optimal constant $(p-1)^{-1/2}$:
\begin{equation*}
\| f+g\|_p^2+\| f-g \|_p^2 \ge 2\|f\|_p^2+2(p-1)\|g\|_p^2,
\end{equation*}
for all $f,g\in L_p$. The inequality is reversed when $2\le p<\infty$. See \cite{BCL94} for two proofs. The analogs of these optimal 2-uniform convexity inequalities for Schatten classes $S_p,1<p\le 2$ were proved by Ball, Carlen and Lieb \cite{BCL94}, which have plenty of applications, for example the optimal hypercontractivity for fermi fields \cite{CL93}. Namely they proved that \cite[Proof of Theorem 1]{BCL94}
\begin{thm}\label{thm:matrix}
	For $1< p\le 2$ and any matrices $A,B$, we have
	\begin{equation}\label{ineq:2-uniform convexity matrix}
	\| A+B\|_p^2+\| A-B \|_p^2 \ge 2\|A\|_p^2+2(p-1)\|B\|_p^2.
	\end{equation}
	The inequality is reversed for $2\le p<\infty$.
\end{thm}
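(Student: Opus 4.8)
The plan is to reduce \eqref{ineq:2-uniform convexity matrix} to an infinitesimal (Hessian) inequality and then to analyze that Hessian through double operator integrals, where it is revealed as a generalized monotone metric. First I would pass to the self-adjoint setting: replacing $A,B$ by the Hermitian dilations $\tilde A=\bigl(\begin{smallmatrix}0&A\\ A^{*}&0\end{smallmatrix}\bigr)$ and $\tilde B=\bigl(\begin{smallmatrix}0&B\\ B^{*}&0\end{smallmatrix}\bigr)$, one has $\|\tilde A\|_{p}=2^{1/p}\|A\|_{p}$ and $\tilde A\pm\tilde B=\widetilde{A\pm B}$, so the factor $2^{2/p}$ cancels from every term and it suffices to treat Hermitian $A,B$. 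Writing $g(t):=\|A+tB\|_{p}^{2}$, Taylor's formula with integral remainder gives
\[ \|A+B\|_{p}^{2}+\|A-B\|_{p}^{2}-2\|A\|_{p}^{2}=\int_{0}^{1}(1-t)\bigl(g''(t)+g''(-t)\bigr)\,dt, \]
legitimate since $g$ is $C^{1}$ with $g'$ absolutely continuous (the a.e. second derivative is locally integrable because $p>1$). Hence it is enough to prove the pointwise bound $g''(0)\ge 2(p-1)\|B\|_{p}^{2}$ for \emph{every} Hermitian $A$, and then apply it along the line $A+tB$.

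Setting $\phi(t):=\tr|A+tB|^{p}=\|A+tB\|_{p}^{p}$ and using the chain rule for $g=\phi^{2/p}$, I find $g''(0)=\tfrac{2}{p}\|A\|_{p}^{2-2p}\bigl[\tfrac{2-p}{p}(\phi'(0))^{2}+\|A\|_{p}^{p}\,\phi''(0)\bigr]$. For $1<p\le 2$ the cross term is nonnegative, so $g''(0)\ge \tfrac{2}{p}\|A\|_{p}^{2-p}\phi''(0)$, and the whole problem collapses to the core Hessian inequality $\phi''(0)=D^{2}(\tr|\cdot|^{p})(A)[B,B]\ge p(p-1)\|A\|_{p}^{p-2}\|B\|_{p}^{2}$. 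I would then compute $\phi''(0)$ by a double operator integral: in an eigenbasis of $A$ (eigenvalues $\lambda_{i}$, made nonzero by a harmless perturbation) one has $\phi''(0)=\sum_{i,j}\tfrac{f'(\lambda_{i})-f'(\lambda_{j})}{\lambda_{i}-\lambda_{j}}\,|B_{ij}|^{2}$ with $f(x)=|x|^{p}$, the divided difference being the symbol of the associated operator integral; all these weights are positive since $f'$ is increasing. Through the L\"owner integral representation of the operator monotone function underlying $|\cdot|^{p-1}$ (here $0<p-1\le1$), this quadratic form is a generalized monotone metric, schematically of the shape $\int_{0}^{\infty}\lambda^{p-1}\,\tr\bigl(B(|A|+\lambda)^{-1}B(|A|+\lambda)^{-1}\bigr)\,d\lambda$, and it is this representation that I would exploit.

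The crux is then the comparison $\phi''(0)\ge p(p-1)\|A\|_{p}^{p-2}\|B\|_{p}^{2}$. When $B$ is diagonal in the eigenbasis of $A$ this is exactly the scalar H\"older inequality $\sum_{i}\lambda_{i}^{p-2}b_{i}^{2}\ge(\sum_{i}\lambda_{i}^{p})^{(p-2)/p}(\sum_{i}|b_{i}|^{p})^{2/p}$, i.e. the commutative $L_{p}$ convexity. The genuine difficulty is that $\|B\|_{p}^{2}$ sees the singular values of $B$, which are not aligned with the eigenbasis of $A$; since pinching onto that basis only \emph{decreases} $\|B\|_{p}$, the off-diagonal entries, weighted by the strictly larger divided differences, must be made to carry exactly their share of the bound. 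I expect this to be the main obstacle, and I would resolve it by estimating the metric $\int_{0}^{\infty}\lambda^{p-1}\|(|A|+\lambda)^{-1/2}B(|A|+\lambda)^{-1/2}\|_{2}^{2}\,d\lambda$ from below by a matrix H\"older/interpolation argument, in which the operator monotonicity of $x\mapsto x^{p-1}$ (equivalently the monotonicity of the metric under compression channels) is precisely what supplies the sharp constant.

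Finally, for $2\le p<\infty$ every comparison reverses: $t\mapsto t^{2/p}$ becomes concave, the chain-rule cross term changes sign, and the divided-difference estimate for $|\cdot|^{p-1}$ (now governed by operator convexity) flips, yielding the reversed inequality by the same integration. Optimality of the constant $2(p-1)$ is inherited from the one-dimensional case, where the computation of $\phi''(0)$ above is an equality.
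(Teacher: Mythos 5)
Your reduction scheme coincides with the paper's: Hermitian dilation, convexity/Taylor in $t$, the chain rule for $\psi^{2/p}$ with the nonnegative cross term discarded, and the identification of $\phi''(0)$ with the quadratic form $\sum_{i,j}\frac{f_p'(\lambda_i)-f_p'(\lambda_j)}{\lambda_i-\lambda_j}|B_{ij}|^2$, i.e.\ (up to the factor $p$) the metric $\bigl\langle B,Q^{A,A}_{f_{p-1}^{[1]}}(B)\bigr\rangle$. Up to that point the proposal is sound, modulo two caveats: replacing $A$ by $|A|$ in the weights is an \emph{inequality}, not an identity --- for eigenvalues of opposite signs the weight is $\frac{p(|\lambda_i|^{p-1}+|\lambda_j|^{p-1})}{|\lambda_i|+|\lambda_j|}$, and one needs Lemma \ref{lem:properties of fp}(2) to see that it dominates the weight attached to $(|\lambda_i|,|\lambda_j|)$ --- and your Taylor-with-integral-remainder step needs the same care about eigenvalues crossing zero that the paper handles by keeping $A$ invertible and $|t|$ small, so that $f_p$ is $C^2$ on a neighborhood of the spectrum.

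The genuine gap sits exactly at the point you yourself call ``the crux.'' Having written $\phi''(0)$ as a monotone-metric quadratic form, you propose to conclude ``by a matrix H\"older/interpolation argument, in which \dots\ the monotonicity of the metric under compression channels \dots\ supplies the sharp constant.'' That is a restatement of the goal, not an argument: no channel is specified, and the natural candidate --- pinching onto the eigenbasis of $A$ --- fails for precisely the reason you note, namely that it shrinks $\|B\|_p$. The paper's resolution is a specific and non-obvious choice: apply the monotonicity of $(A,B,X)\mapsto\bigl\langle X,Q^{A,B}_{f_\alpha^{[1]}}(X)\bigr\rangle$ (Lemma \ref{lem:monotonicity}, with $\alpha=p-1$) to the conditional expectation $\E$ onto the von Neumann algebra generated by $B$. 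This channel fixes $B$, so the term $\|B\|_p^2$ in \eqref{ineq:key} is untouched, while it replaces $A$ by $\E(A)$, which \emph{commutes} with $B$; in that commuting situation the metric collapses to $p(p-1)\tr\bigl(\E(A)^{p-2}B^2\bigr)$, and scalar H\"older together with $\|\E(A)\|_p\le\|A\|_p$ delivers the constant. Note also that the monotonicity statement itself, \eqref{ineq:monotonicity}, is a theorem requiring proof (via Hiai--Petz joint convexity of quasi-entropies, or the integral representation in the paper's appendix), not a formal consequence of operator monotonicity of $x\mapsto x^{p-1}$. Without naming this channel and invoking (or proving) that monotonicity, your argument stops where the difficulty begins; the same objection applies to your claim that for $p\ge 2$ ``every comparison reverses,'' since $x^{p-1}$ is no longer operator monotone there --- the paper, like Ball--Carlen--Lieb, obtains that case by duality instead.
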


Following their proof, Ricard and Xu \cite{RX16} extended this result to  noncommutative $L_p$-spaces associated with general von Neumann algebras, and applied it to hypercontractivity for free group von Neumann algebras. In this paper we revisit the proof of Theorem \ref{thm:matrix} using multiple operator integrals and generalized monotone metrics in quantum information theory, inspired by two recent preprints \cite{CHPPR20} and \cite{Li20}.

\section{Some lemmas}
\label{sect:2 lemmas}
We start with two lemmas on divided difference of order $n=1,2$. For an open set $\Omega\subset \real$, denote by $C^2(\Omega)$ the collection of all twice continuously differentiable functions on $\Omega$. For $f\in C^2(\Omega)$ and $r,s,t\in \Omega$, put
\begin{equation}
f^{[1]}(r,s):=
\begin{cases*}
\frac{f(r)-f(s)}{r-s} & $r\neq s$\\
f'(r)& $r=s$
\end{cases*}
\end{equation}
and 
\begin{equation}
f^{[2]}(r,s,t):=
\begin{cases*}
\frac{f^{[1]}(r,s)-f^{[1]}(r,t)}{s-t} & $s\neq t$\\
\frac{\partial}{\partial \lambda}|_{\lambda=s}f^{[1]}(r,\lambda)&$s=t$
\end{cases*}.
\end{equation}

The following lemma is stated for $f\in C^2(\real)$, but it is valid for all $f\in C^2(\Omega)$ where $\Omega\subset \real$ is an open set containing the spectrum of $A+tB$ for $|t|$ small enough.
\begin{lem}\cite[Theorem 5.3.2]{ST19}\label{lem:double operators}
	Suppose that $A$ and $B$ are two self-adjoint matrices.
 Then for any $f\in C^2(\real)$ we have
	\begin{equation*}
	\left.\dfrac{d^2}{dt^2}\right|_{t=0}f(A+tB)
	=2\sum_{i=1}^{n}\sum_{j=1}^{n}\sum_{k=1}^{n}f^{[2]}(\lambda_i,\lambda_j,\lambda_k) E^A_i B E^A_j B E^A_k,
	\end{equation*}
	where $A=\sum_{i=1}^{n}\lambda_i E^A_i$ is the spectral decomposition of $A$.
\end{lem}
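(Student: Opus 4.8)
The plan is to compute the second derivative of $f(A+tB)$ at $t=0$ by expanding everything in the spectral basis of $A$ and differentiating a suitable integral or series representation of $f(A+tB)$. The natural tool is the perturbation formula for functions of self-adjoint matrices. First I would write $A=\sum_i \lambda_i E^A_i$ and note that the whole problem is finite-dimensional, so there are no domain or convergence subtleties; everything reduces to differentiating a matrix-valued analytic function of the real parameter $t$.

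The key technical device I would use is the integral (Daletskii--Krein) representation of the first derivative, namely
\begin{equation*}
\left.\frac{d}{dt}\right|_{t=0} f(A+tB)=\sum_{i,j} f^{[1]}(\lambda_i,\lambda_j)\, E^A_i B E^A_j,
\end{equation*}
which expresses the Gateaux derivative of the functional calculus as a double operator integral against the divided difference $f^{[1]}$. I would establish this first-order formula either by differentiating the resolvent/Cauchy integral $f(A+tB)=\frac{1}{2\pi i}\oint f(z)(z-A-tB)^{-1}\,dz$ and using the first resolvent identity, or by checking it on a dense enough class (e.g.\ exponentials or polynomials) and extending by the $C^2$ hypothesis. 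The general principle is that differentiating the functional calculus once replaces $f$ by its divided difference $f^{[1]}$ and inserts one factor of $B$ between spectral projections.

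Next I would differentiate a second time. Applying the same principle to the first-order formula, each further differentiation inserts another factor of $B$ and passes to a higher-order divided difference; doing this carefully and then collecting the two terms coming from differentiating each of the two $f^{[1]}$-weighted resolvent factors produces a sum over triples $(i,j,k)$ with coefficient $f^{[2]}(\lambda_i,\lambda_j,\lambda_k)$. The symmetry of the second-order divided difference $f^{[2]}$ in its three arguments is what allows the two structurally identical contributions to be combined into the single factor $2$ appearing in the statement. Concretely, I expect the raw computation to yield two triple sums that, after relabelling summation indices and invoking the symmetry of $f^{[2]}$, coincide and hence add up to twice one of them.

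The main obstacle is bookkeeping rather than conceptual: one must correctly handle the coincidence cases $\lambda_i=\lambda_j$ and $\lambda_j=\lambda_k$, where the divided differences are defined by the limiting (derivative) formulas, and verify that the integral representation remains valid and that the differentiation passes through. I would manage this by working uniformly through the contour-integral representation, where the divided differences arise automatically as $f^{[1]}(\lambda_i,\lambda_j)=\frac{1}{2\pi i}\oint f(z)\frac{dz}{(z-\lambda_i)(z-\lambda_j)}$ and similarly $f^{[2]}$ with three resolvent factors, so that the distinction between equal and distinct eigenvalues never has to be treated separately: the contour integral is continuous across these coincidences and reproduces exactly the piecewise definitions of $f^{[1]}$ and $f^{[2]}$ by the residue calculus. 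This is where the $C^2$ assumption on $f$ is used to guarantee that the second-order object is well defined and that the second derivative exists.
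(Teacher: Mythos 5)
The paper does not actually prove this lemma: it is quoted verbatim from \cite[Theorem 5.3.2]{ST19}, so there is no internal proof to compare against, and your sketch must be judged against the standard argument. Its computational core is correct. Writing $R_t=(z-A-tB)^{-1}$, one has $\frac{d}{dt}R_t=R_tBR_t$ and hence $\frac{d^2}{dt^2}R_t=2R_tBR_tBR_t$; expanding $(z-A)^{-1}=\sum_i(z-\lambda_i)^{-1}E^A_i$ and using the residue identity $\frac{1}{2\pi i}\oint f(z)\,(z-\lambda_i)^{-1}(z-\lambda_j)^{-1}(z-\lambda_k)^{-1}\,dz=f^{[2]}(\lambda_i,\lambda_j,\lambda_k)$ yields exactly the claimed formula. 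Note that in this route the factor $2$ falls out of the product rule applied to $R_tBR_t$, so you do not really need to invoke the symmetry of $f^{[2]}$ to merge two sums (though that symmetry is why the two terms are literally identical rather than equal only after relabelling). For the one function to which the lemma is applied in this paper, $f_p(x)=|x|^p$ on $\Omega_\varepsilon=\real\setminus[-\tfrac12 M_\varepsilon^{-1},\tfrac12 M_\varepsilon^{-1}]$, the contour argument is fully legitimate, since $f_p$ extends holomorphically to a complex neighbourhood of each of the two components containing the spectrum and one simply integrates over two contours.

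The genuine gap is in the claimed generality. The lemma is asserted for all $f\in C^2(\real)$, and your final paragraph proposes to handle the coincidence cases $\lambda_i=\lambda_j$ ``uniformly through the contour-integral representation''; but $f(A+tB)=\frac{1}{2\pi i}\oint f(z)R_t\,dz$ presupposes that $f$ is holomorphic near the spectrum, which a general $C^2$ function is not, so the device you lean on for the delicate part of the bookkeeping is unavailable in exactly the generality you need. To cover $C^2$ you must carry out the approximation route you mention only in passing for the first-order formula: approximate $f$ in the $C^2$ norm on a compact neighbourhood of $\bigcup_{|t|\le\varepsilon}\sigma(A+tB)$ by polynomials $f_n$, use the Hermite--Genocchi representation $f^{[2]}(x_0,x_1,x_2)=\int_{\Delta_2}f''(s_0x_0+s_1x_1+s_2x_2)\,ds_1\,ds_2$ to obtain the uniform bound $|f^{[2]}|\le\tfrac12\sup|f''|$ and the convergence $f_n^{[2]}\to f^{[2]}$ on the spectrum, and then show that $\frac{d^2}{dt^2}f_n(A+tB)$ converges uniformly in $t$ near $0$ so that the limit function is itself twice differentiable with the limiting second derivative. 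That final interchange of limit and differentiation is the actual analytic content of \cite[Theorem 5.3.2]{ST19} and is not addressed by your sketch; as written, you have proved the formula only for $f$ analytic near the spectrum (which, to be fair, is all this paper ever uses).
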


The following lemma collects a few properties of $f_p^{[2]}$ for $f_p(x)=|x|^p$.
\begin{lem}\label{lem:properties of fp}
	For $1< p\le 2$, put $f_p(x):=|x|^p$. Then for any $r,s\ne 0$ we have
	\begin{enumerate}
		\item $f_p^{[2]}(r,r,r)= f_p^{[2]}(|r|,|r|,|r|)$;
		\item $f_p^{[2]}(r,s,r)+f_p^{[2]}(s,r,s)\ge f_p^{[2]}(|r|,|s|,|r|)+f_p^{[2]}(|s|,|r|,|s|)$ when $r\ne s$.
	\end{enumerate}
\end{lem}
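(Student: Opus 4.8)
The plan is to reduce both parts to elementary properties of the odd, increasing function $g(x):=\sgn(x)|x|^{p-1}=|x|^{p-2}x$, which (up to the factor $p$) is the derivative of $f_p$.

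For part (1), I would first record the standard fact that the fully confluent second divided difference of a $C^2$ function equals half its second derivative, $f_p^{[2]}(r,r,r)=\tfrac12 f_p''(r)$; this comes directly from the defining formula by Taylor expanding $f_p^{[1]}(r,\lambda)$ in $\lambda$ about $\lambda=r$. Since $f_p''(x)=p(p-1)|x|^{p-2}$ depends only on $|x|$ (because $f_p$ is even), we get $f_p^{[2]}(r,r,r)=\tfrac{p(p-1)}{2}|r|^{p-2}=f_p^{[2]}(|r|,|r|,|r|)$, which is (1).

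For part (2) the first step is a direct computation of the two mixed divided differences from the definition. Writing $D:=f_p^{[1]}(r,s)=\frac{|r|^p-|s|^p}{r-s}$ and using $f_p^{[1]}(r,r)=f_p'(r)=p|r|^{p-2}r$, one finds
\begin{equation*}
f_p^{[2]}(r,s,r)=\frac{-D+p|r|^{p-2}r}{r-s},\qquad f_p^{[2]}(s,r,s)=\frac{D-p|s|^{p-2}s}{r-s},
\end{equation*}
so that the $D$-terms cancel and
\begin{equation*}
f_p^{[2]}(r,s,r)+f_p^{[2]}(s,r,s)=p\,\frac{g(r)-g(s)}{r-s}=p\,g^{[1]}(r,s).
\end{equation*}
Since $|r|,|s|>0$ give $g(|r|)=|r|^{p-1}$, the same identity applied to $|r|,|s|$ turns (2) into the single inequality $g^{[1]}(r,s)\ge g^{[1]}(|r|,|s|)$; that is, the secant slope of $g$ over $r,s$ is never smaller than over $|r|,|s|$ (the right-hand side being read as $g'(|r|)$ in the degenerate case $r=-s$).

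To prove $g^{[1]}(r,s)\ge g^{[1]}(|r|,|s|)$ I would pass to the integral representation of the divided difference. Because $g'(x)=(p-1)|x|^{p-2}$ is locally integrable for $p>1$, the function $g$ is absolutely continuous across $0$ and
\begin{equation*}
g^{[1]}(r,s)=\int_0^1 g'\big((1-\theta)s+\theta r\big)\,d\theta=(p-1)\int_0^1\big|(1-\theta)s+\theta r\big|^{p-2}\,d\theta.
\end{equation*}
The triangle inequality gives $|(1-\theta)s+\theta r|\le(1-\theta)|s|+\theta|r|$ for every $\theta\in[0,1]$, and since $p\le 2$ the map $t\mapsto t^{p-2}$ is nonincreasing on $(0,\infty)$; hence the integrand for $r,s$ dominates the integrand for $|r|,|s|$ pointwise, and integrating yields $g^{[1]}(r,s)\ge g^{[1]}(|r|,|s|)$. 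At $p=2$ both integrands equal $1$, giving equality, consistent with the reversed inequality at the endpoint of Theorem \ref{thm:matrix}.

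The main obstacle is the analytic care needed exactly when $r$ and $s$ have opposite signs: then the segment $(1-\theta)s+\theta r$ crosses $0$, where $g'$ blows up, so I must verify that the integral representation remains valid through the singularity and that the integral converges. Both rely on $1<p\le 2$: convergence needs $p-2>-1$, while the correct direction of the inequality needs $p-2\le 0$. One can avoid the singularity entirely by splitting into the same-sign case, where $g^{[1]}(r,s)=g^{[1]}(|r|,|s|)$ by oddness of $g$, and the opposite-sign case, where clearing the (positive) denominators reduces the claim to $ab\big(b^{p-2}-a^{p-2}\big)\ge 0$ with $a=|r|,\,b=|s|$; but the integral form is cleaner and uniform in the cases.
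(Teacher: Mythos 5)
Your proof is correct, and its core coincides with the paper's: part (1) via $f_p^{[2]}(r,r,r)=\tfrac12 f_p''(r)$ and evenness, and for part (2) the same telescoping identity $f_p^{[2]}(r,s,r)+f_p^{[2]}(s,r,s)=\frac{f_p'(r)-f_p'(s)}{r-s}=p\,g^{[1]}(r,s)$ with $g(x)=\sgn(x)|x|^{p-1}$. Where you genuinely diverge is in establishing $g^{[1]}(r,s)\ge g^{[1]}(|r|,|s|)$. The paper splits into $rs>0$ (equality by oddness) and $rs<0$, and in the latter case verifies
\begin{equation*}
\frac{|r|^{p-1}-|s|^{p-1}}{|r|-|s|}\le \frac{|r|^{p-1}+|s|^{p-1}}{|r|+|s|}
\end{equation*}
by an algebraic manipulation using that $x\mapsto x^{2-p}$ is nondecreasing. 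You instead write $g^{[1]}(r,s)=(p-1)\int_0^1\bigl|(1-\theta)s+\theta r\bigr|^{p-2}\,d\theta$ and dominate the integrand pointwise via the triangle inequality and the monotonicity of $t\mapsto t^{p-2}$. Both routes rest on the same elementary monotonicity fact, but yours avoids the case split and treats the degenerate case $r=-s$ uniformly (the paper's displayed inequality is a $0/0$ expression there and implicitly needs part (1) to interpret the right-hand side), at the price of justifying the fundamental theorem of calculus across the integrable singularity of $g'$ at $0$ --- which you correctly identify and handle using $p-2>-1$. One small slip in your parenthetical alternative: after clearing denominators in the opposite-sign case the reduced claim is $ab\bigl(b^{p-2}-a^{p-2}\bigr)/(a-b)\ge 0$, not $ab\bigl(b^{p-2}-a^{p-2}\bigr)\ge 0$; the factor $a-b$ is not sign-definite. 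This does not affect your main argument.
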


\begin{proof}
	(1) is a direct computation: when $r\ne 0$,
	\begin{equation}\label{eq:r r r}
	f_p^{[2]}(r,r,r)=\frac{1}{2}f_p''(r)=\frac{1}{2}f_p''(|r|)=\frac{1}{2}p(p-1)|r|^{p-2}.
	\end{equation}
	(2) By definition, when $r\ne s$, we have
	\begin{equation*}
	f_p^{[2]}(r,s,r)=\frac{f_p^{[1]}(r,s)-f_p'(r)}{s-r}=\frac{f_p(s)-f_p(r)-(s-r)f_p'(r)}{(s-r)^2}.
	\end{equation*}
	Then 
	\begin{align*}
	&f_p^{[2]}(r,s,r)+f_p^{[2]}(s,r,s)\\
	=&\frac{f_p(s)-f_p(r)-(s-r)f_p'(r)}{(s-r)^2}+\frac{f_p(r)-f_p(s)-(r-s)f_p'(s)}{(r-s)^2}\\
	=&\frac{f_p'(r)-f_p'(s)}{r-s}.
	\end{align*}
	Since $f_p'(r)=p|r|^{p-1}\sgn(r)$, we get 
	\begin{equation}\label{eq:r,s}
	f_p^{[2]}(r,s,r)+f_p^{[2]}(s,r,s)
	=\begin{cases*}
	\frac{p\left(|r|^{p-1}-|s|^{p-1}\right)}{|r|-|s|}& $rs>0$\\
	\frac{p\left(|r|^{p-1}+|s|^{p-1}\right)}{|r|+|s|}& $rs<0$
	\end{cases*}.
	\end{equation}
	The proof of (2) will be finished if
	\begin{equation*}
	\frac{|r|^{p-1}-|s|^{p-1}}{|r|-|s|}\le \frac{|r|^{p-1}+|s|^{p-1}}{|r|+|s|},
	\end{equation*}
	or equivalently
	\begin{equation*}
	\frac{2|rs|^{p-1}\left(|r|^{2-p}-|s|^{2-p}\right)}{\left(|r|+|s|\right)\left(|r|-|s|\right)}\ge 0.
	\end{equation*}
	This is trivial, since $p\le 2$ and thus $x\mapsto x^{2-p}$ is non-decreasing on $(0,\infty)$.
\end{proof}

In \cite{CHPPR20} the authors used 
$$f_p^{[2]}(r,s,r)+f_p^{[2]}(s,r,s)\ge 0,~~r,s\ne 0$$
to prove 
\begin{equation*}
\left.\dfrac{d^2}{dt^2}\right|_{t=0}\tr \left(f_p(A+tB)\right)\ge 0,
\end{equation*}
for self-adjoint $A,B$. The key of this paper can be understood as an improvement of this result, using the following lemma on the joint convexity/the monotonicity property of certain trace functionals. Either of the two properties is enough for our use. The former (Lemma \ref{lem:monotonicity} (1)) is a combination of the results of Hiai-Petz \cite{HP12quasi} and Besenyi-Petz \cite{BP11}. The latter (Lemma \ref{lem:monotonicity} (2)) is essentially from a recent preprint of Li \cite{Li20} (see Proposition 2.5, Proposition 2.7 and Remark 2.8 therein), where she used double operator integrals to generalize the monotone metrics in quantum information theory. This monotonicity result has its own interest, but the details of its proof were not fully provided in \cite{Li20}. For the reader's convenience, we give a detailed proof in Appendix \ref{appendix}.


We need a few notations. For any function $F:(0,\infty)\times (0,\infty)\to (0,\infty)$, and any positive definite matrices $A,B$ with spectral decompositions $A=\sum_{j}\lambda_j E^A_j$ and $B=\sum_{k}\mu_k E^B_k$, put
\begin{equation}
Q_{F}^{A,B}(X):=\sum_{j,k}F(\lambda_j,\mu_k) E^A_j X E^B_k.
\end{equation}
We use $\langle A,B\rangle:=\tr (A^*B)$ to denote the Hilbert-Schmidt inner product. 

\begin{lem}\label{lem:monotonicity}
	For $0< \alpha<1$, put $f_{\alpha}(x):=x^\alpha,x>0$. Let $A,B$ be any positive definite matrices and $X$ be any matrix.
	\begin{enumerate}
		\item The trace functionals
		\begin{equation*}
		(A,B,X)\mapsto \left\langle X, Q_{f_{\alpha}^{[1]}}^{A,B}(X)\right\rangle
		\end{equation*}
		are jointly convex.
		\item For any unital completely positive trace preserving map $\beta$, we have
		\begin{equation}\label{ineq:monotonicity}
		\left\langle \beta(X), Q_{f_{\alpha}^{[1]}}^{\beta(A),\beta(B)}(\beta(X))\right\rangle 
		\le \left\langle X, Q_{f_{\alpha}^{[1]}}^{A,B}(X)\right\rangle.
		\end{equation}
	\end{enumerate}
\end{lem}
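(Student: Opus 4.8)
The plan is to establish both statements simultaneously by rewriting $\langle X, Q^{A,B}_{f_\alpha^{[1]}}(X)\rangle$ as a positive superposition of elementary resolvent forms that depend \emph{affinely} on $(A,B)$. Regard $M_n$ as the Hilbert space $(M_n,\langle\cdot,\cdot\rangle)$ and let $\mathcal{L}_A(Z):=AZ$, $\mathcal{R}_B(Z):=ZB$ be the left and right multiplication operators; these are commuting positive operators, and on the sector $E^A_jZE^B_k$ the operator $s\mathcal{L}_A+(1-s)\mathcal{R}_B$ acts as the scalar $s\lambda_j+(1-s)\mu_k$. Consequently, for any function of the form $G(x,y)=g(sx+(1-s)y)$ one has the functional-calculus identity $Q^{A,B}_{G}=g\big(s\mathcal{L}_A+(1-s)\mathcal{R}_B\big)$.

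First I would record two scalar integral representations. Differentiating $s\mapsto(sx+(1-s)y)^\alpha$ gives
\begin{equation*}
f_\alpha^{[1]}(x,y)=\alpha\int_0^1\big(sx+(1-s)y\big)^{\alpha-1}\,ds,
\end{equation*}
while the subordination formula $u^{\alpha-1}=\frac{\sin\pi\alpha}{\pi}\int_0^\infty\lambda^{\alpha-1}(u+\lambda)^{-1}\,d\lambda$ holds for $0<\alpha<1$. Combining them with the identity above yields
\begin{equation*}
\left\langle X, Q^{A,B}_{f_\alpha^{[1]}}(X)\right\rangle=\frac{\alpha\sin\pi\alpha}{\pi}\int_0^1\!\!\int_0^\infty \lambda^{\alpha-1}\,\Theta_{s,\lambda}(A,B,X)\,d\lambda\,ds,
\end{equation*}
where $\Theta_{s,\lambda}(A,B,X):=\big\langle X,\mathcal{T}^{-1}X\big\rangle$ and $\mathcal{T}:=s\mathcal{L}_A+(1-s)\mathcal{R}_B+\lambda$ is a positive operator depending affinely on $(A,B)$. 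The whole lemma thus reduces to two properties of the single block $\Theta_{s,\lambda}$.

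For part (1) I would invoke the standard Schur-complement fact that $(\mathcal{M},X)\mapsto\langle X,\mathcal{M}^{-1}X\rangle$ is jointly convex, since $t\ge\langle X,\mathcal{M}^{-1}X\rangle$ is equivalent to the positivity of the block operator $\big(\begin{smallmatrix}\mathcal{M}&X\\X^*&t\end{smallmatrix}\big)$. As $(A,B,X)\mapsto(\mathcal{T},X)$ is affine, each $\Theta_{s,\lambda}$ is jointly convex in $(A,B,X)$, and integrating against the positive measure $\tfrac{\alpha\sin\pi\alpha}{\pi}\lambda^{\alpha-1}\,ds\,d\lambda$ preserves this. For part (2) I would prove the data-processing inequality for each $\Theta_{s,\lambda}$ and then integrate. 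Writing $\beta$ also for the induced operator on $(M_n,\langle\cdot,\cdot\rangle)$ and $\beta^\dagger$ for its adjoint there, the variational identity $\langle v,\mathcal{M}^{-1}v\rangle=\sup_w\big(2\,\mathrm{Re}\,\langle v,w\rangle-\langle w,\mathcal{M}w\rangle\big)$ reduces $\Theta_{s,\lambda}(\beta(A),\beta(B),\beta(X))\le\Theta_{s,\lambda}(A,B,X)$ to the operator inequality $\beta\,\mathcal{T}_{A,B}\,\beta^\dagger\le\mathcal{T}_{\beta(A),\beta(B)}$. Since $\beta$ is unital, trace preserving and completely positive, the adjoint $\beta^\dagger$ enjoys the same three properties; splitting $\mathcal{T}$ into its three summands, the inequality follows summand by summand from the Kadison--Schwarz inequalities $\beta^\dagger(Y)^*\beta^\dagger(Y)\le\beta^\dagger(Y^*Y)$ and $\beta^\dagger(Y)\beta^\dagger(Y)^*\le\beta^\dagger(YY^*)$ together with $A,B\ge0$, the $\lambda$-term using only the Hilbert--Schmidt contractivity $\|\beta^\dagger(Y)\|_2\le\|Y\|_2$.

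The main obstacle is choosing the correct integral representation. The seemingly natural choice $f_\alpha^{[1]}(x,y)=\frac{\sin\pi\alpha}{\pi}\int_0^\infty t^\alpha\big((x+t)(y+t)\big)^{-1}\,dt$ produces the block $\tr\big(X^*(A+t)^{-1}X(B+t)^{-1}\big)=\langle X,(\mathcal{L}_{A+t}\mathcal{R}_{B+t})^{-1}X\rangle$, where $\mathcal{L}_A\mathcal{R}_B$ depends \emph{bilinearly} on $(A,B)$; this block is neither jointly convex nor monotone --- already the completely depolarizing channel violates its monotonicity --- so a termwise reduction is impossible. It is precisely the passage through powers of the \emph{affine} combination $s\mathcal{L}_A+(1-s)\mathcal{R}_B$ that makes both the Schur-complement convexity and the Kadison--Schwarz monotonicity available. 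Recognizing that one must integrate $(sx+(1-s)y)^{\alpha-1}$ rather than products of resolvents is the crux; the remaining ingredients --- the two scalar identities, Fubini, and the passage to the adjoint channel --- are routine.
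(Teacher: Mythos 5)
Your proof is correct, but it takes a genuinely different route from the paper. The paper proves (1) by identifying $\left\langle X,Q^{A,B}_{f_\alpha^{[1]}}(X)\right\rangle$ with the Hiai--Petz quasi-entropy $I^{1-\alpha}_{g_\alpha}(A,B,X)$ and then quoting two nontrivial external results: the joint convexity of $I^\theta_f$ for operator monotone $f$ \cite[Theorem 7]{HP12quasi} and the operator monotonicity of $g_\alpha(x)=\left(\frac{x^\alpha-1}{x-1}\right)^{1/(\alpha-1)}$ due to Besenyei--Petz \cite[Theorem 3]{BP11}; for (2) it uses, in the appendix, the representation $x^\alpha-y^\alpha=c_\alpha\int_0^\infty t^{\alpha-1}\left(\log(x+t)-\log(y+t)\right)dt$ to reduce to the kernel of the Kubo--Mori type, $Q^{A,B}_{g^{[1]}}=(J_h^{A,B})^{-1}$ with $h(x)=(x-1)/\log x$ operator monotone, and again quotes Hiai--Petz \cite[Theorem 5]{HP12quasi}. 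You instead expand $f_\alpha^{[1]}$ as a positive superposition of resolvents of \emph{affine} combinations, so that both joint convexity and data processing are checked on the single elementary block $\left\langle X,(s\mathcal{L}_A+(1-s)\mathcal{R}_B+\lambda)^{-1}X\right\rangle$, via the Schur complement for (1) and the variational formula plus the Kadison--Schwarz inequality for the adjoint channel for (2). All your steps check out: the two scalar identities are correct, the functional-calculus identity $Q^{A,B}_{g(s\cdot+(1-s)\cdot)}=g(s\mathcal{L}_A+(1-s)\mathcal{R}_B)$ holds because $\mathcal{L}_A$ and $\mathcal{R}_B$ commute, the reduction of monotonicity to $\beta\,\mathcal{T}_{A,B}\,\beta^\dagger\le\mathcal{T}_{\beta(A),\beta(B)}$ is valid, and $\beta^\dagger$ is indeed unital, trace preserving and completely positive, so the term-by-term Kadison--Schwarz estimates go through (using $A,B\ge 0$ to pair the operator inequalities with the trace). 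What your approach buys is self-containedness and unity: no operator monotonicity of $g_\alpha$, no quasi-entropy machinery, and (1) and (2) fall out of one decomposition, whereas the paper needs two different integral representations and a separate appendix. What the paper's route buys is brevity and the explicit placement of the lemma inside the established monotone-metric framework of \cite{HP12quasi}, which is the conceptual point the author wants to emphasize.
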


\begin{proof}
	(1) Denote by $L_A$ and $R_B$ the left multiplication and right multiplication operators respectively, i.e. $L_A(C):=AC$ and $R_B(C):=CB$. Hiai and Petz \cite[Theorem 7]{HP12quasi} proved that if $\theta\in (0,1]$ and $f:(0,\infty)\to(0,\infty)$ is operator monotone, then 
	\begin{equation}\label{eq:I f theta}
	(A,B,X)\mapsto I^\theta_f(A,B,X):=\left\langle X, (f(L_A R_B^{-1})R_B)^{-\theta}(X)\right\rangle,
	\end{equation}
	is jointly convex, where $A$ and $B$ are positive definite matrices and $X$ is any matrix. If $A$ and $B$ have the spectral decompositions $A=\sum_{j}\lambda_j E^A_j$ and $B=\sum_{k}\mu_k E^B_k$, then 
	\begin{equation*}
	I^\theta_f(A,B,X)=\sum_{j,k} \left[f(\lambda_j/\mu_k)\mu_k\right]^{-\theta}\tr \left(X^*E^A_j XE^B_k\right).
	\end{equation*}
	Note that 
	\begin{equation*}
	f_{\alpha}^{[1]}(r,s)=
	\begin{cases*}
	\frac{r^{\alpha}-s^{\alpha}}{r-s}=\frac{\left(r/s\right)^{\alpha}-1}{r/s-1}\cdot s^{\alpha-1}
	=\left[g_{\alpha}\left(r/s\right)s\right]^{\alpha-1}&$r\ne s$\\
	\alpha s^{\alpha-1} =[g_{\alpha}(1)s]^{\alpha-1}&$r=s$.
	\end{cases*}
	\end{equation*}
	where $g_{\alpha}:(0,\infty)\to (0,\infty)$ is given by
	\begin{equation}\label{eq:g alpha operator monotone}
	g_{\alpha}(x)=
	\begin{cases*}
	\left(\frac{x^{\alpha}-1}{x-1}\right)^{\frac{1}{\alpha-1}}&$x\ne 1$\\
	\alpha^{\frac{1}{\alpha-1}}&$x=1$
	\end{cases*}.
	\end{equation}
	Therefore we have
	\begin{equation*}
	\left\langle X,Q^{A,B}_{f^{[1]}_{\alpha}}(X)\right\rangle
	=I_{g_{\alpha}}^{\theta}(A,B,X),
	\end{equation*}
	with $\theta=1-\alpha\in (0,1)$. It remains to prove that the function $g_{\alpha}$ in \eqref{eq:g alpha operator monotone} is operator monotone when $\alpha\in(0,1)$. This is a result of Besenyi-Petz \cite[Theorem 3]{BP11}.
	
	(2) See Appendix \ref{appendix}.
\end{proof}

\section{Proof of main result}
Now we are ready to reprove Theorem \ref{thm:matrix}.

\begin{proof}[Proof of Theorem \ref{thm:matrix}]
	As we mentioned in the beginning of the introduction, it suffices to prove $1<p\le 2$; see \cite{BCL94} for details. Note that the case $p=2$ is trivial. So let us fix $1<p<2$. By a standard argument we may assume that $A$ and $B$ are self-adjoint. In fact, 
	\begin{equation*}
	A':=\begin{pmatrix}
	0& A\\
	A^*&0
	\end{pmatrix},~~
	B':=\begin{pmatrix}
	0&B\\
	B^*&0
	\end{pmatrix},
	\end{equation*}
	are self-adjoint, and \eqref{ineq:2-uniform convexity matrix}  for $(A',B')$ will yield \eqref{ineq:2-uniform convexity matrix}  for $(A,B)$. By approximation, we may assume that $A$ is invertible. For any $0<\varepsilon<\|A^{-1}B\|^{-1}$, we have
	 \begin{align*}
	 \|(A+tB)^{-1}\|
	 \le \|A^{-1}\|\|(I+tA^{-1}B)^{-1}\|
	 \le \|A^{-1}\|\left(1-|t|\|A^{-1}B\|\right)^{-1}
	 \le M_{\varepsilon}
	 \end{align*}
	 whenever $|t|<\varepsilon$, where $I$ is the identity matrix and $M_{\varepsilon}:=\|A^{-1}\|\left(1-\varepsilon\|A^{-1}B\|\right)^{-1}<\infty$. Therefore for any $|t|<\varepsilon$, the spectrum of $A+tB$ is contained in $\Omega_{\varepsilon}:=\real\setminus [-\frac{1}{2}M_{\varepsilon}^{-1},\frac{1}{2}M^{-1}_{\varepsilon}]$. Then $f_p(x)=|x|^p\in C^2(\Omega_{\varepsilon})$ and we may use Lemma \ref{lem:double operators}.
	 
	By \cite[Proof of Theorem 1]{BCL94}, the proof of \eqref{ineq:2-uniform convexity matrix} is reduced to
	\begin{equation}\label{ineq:key}
	\left.\dfrac{d^2}{dt^2}\right|_{t=0}\tr \left(f_p(A+tB)\right)\ge p(p-1)\|A\|_p^{p-2}\|B\|_p^2.
	\end{equation} 
	which is the main obstacle. Indeed, set $\varphi(t):=\|A+tB\|^2_p-(p-1)t^2\|B\|_p^2$. Then \eqref{ineq:2-uniform convexity matrix} is nothing but 
	\begin{equation*}
	\varphi(1)+\varphi(-1)\ge 2\varphi(0).
	\end{equation*}
	So it suffices to prove the convexity of $\varphi$, that is,
	\begin{equation}\label{ineq:pseudo-t}
	\dfrac{d^2}{dt^2}\|A+tB\|^2_p\ge 2(p-1)\|B\|_p^2.
	\end{equation}
	 By replacing $A$ with $A+tB$, it suffices to prove 
	\begin{equation}\label{ineq:pseudo-0}
	\left.\dfrac{d^2}{dt^2}\right|_{t=0}\|A+tB\|^2_p\ge 2(p-1)\|B\|_p^2.
	\end{equation}
	Put $\psi(t):=\tr\left(f_p(A+tB)\right)$. Then $\psi(0)=\tr|A|^p\in(0,\infty)$. Thus by Lemma \ref{lem:double operators}, $\|A+tB\|^2_p=\psi(t)^{2/p}$ is twice differentiable at $0$ and 
	\begin{align*}
	\left.\dfrac{d^2}{dt^2}\right|_{t=0}\|A+tB\|^2_p
	=&\left.\dfrac{d^2}{dt^2}\right|_{t=0}\left(\psi(t)^{\frac{2}{p}}\right)\\
	=&\frac{2}{p}\psi(0)^{\frac{2}{p}-1}\psi''(0)+\frac{2}{p}\left(\frac{2}{p}-1\right)\psi(0)^{\frac{2}{p}-2}\psi'(0)^2\\
	\ge &\frac{2}{p}\psi(0)^{\frac{2}{p}-1}\psi''(0)\\
	=&\frac{2}{p}\|A\|_p^{2-p}\psi''(0),
	\end{align*}
	where we have used the fact that $p\le 2$. This, together with \eqref{ineq:pseudo-0}, allows us to reduce the proof of \eqref{ineq:2-uniform convexity matrix} to \eqref{ineq:key}.
	
	Now we turn to the proof of \eqref{ineq:key}, which is different from the one in \cite{BCL94}. Let $A=\sum_{i=1}^{n}\lambda_i E^A_i$ be the spectral decomposition of $A$. From Lemma \ref{lem:double operators}, it follows that
	\begin{align*}
	\left.\dfrac{d^2}{dt^2}\right|_{t=0}\tr \left(f_p(A+tB)\right)
	=&2\sum_{i}\sum_{j}\sum_{k}f_p^{[2]}(\lambda_i,\lambda_j,\lambda_k)\tr\left(E^A_iBE^A_jBE^A_k\right)\\
	=&2\sum_{i}\sum_{j}f_p^{[2]}(\lambda_i,\lambda_j,\lambda_i)\tr\left(BE^A_jBE^A_i\right),
	\end{align*}
	where we have used the cyclicity of the trace and $E^A_{i}E^A_{k}=\delta_{ik}E^{A}_i$. Note that $\tr\left(BE^A_jBE^A_i\right)=\tr\left(BE^A_iBE^A_j\right)$, then 
	\begin{align*}
	\left.\dfrac{d^2}{dt^2}\right|_{t=0}\tr \left(f_p(A+tB)\right)
	=&\sum_{i}\sum_{j}\left(f_p^{[2]}(\lambda_i,\lambda_j,\lambda_i)
	+f_p^{[2]}(\lambda_j,\lambda_i,\lambda_j)\right)\tr\left(BE^A_jBE^A_i\right).
	\end{align*}
	
	We claim first that to prove \eqref{ineq:key} we may assume that $A$ is positive definite. The argument here is different from Ball-Carlen-Lieb's. In fact, the right hand side of \eqref{ineq:key} remains unchanged if we replace $A$ with $|A|$. Then it reduces to show that the left hand side of \eqref{ineq:key} is non-increasing when replacing $A$ with $|A|$. Clearly, $\tr\left(BE^A_iBE^A_j\right)\ge 0$ for all $i,j$. So it suffices to show that for all $i,j$
	\begin{equation*}
	f_p^{[2]}(\lambda_i,\lambda_j,\lambda_i)+f_p^{[2]}(\lambda_j,\lambda_i,\lambda_j)\ge f_p^{[2]}(|\lambda_i|,|\lambda_j|,|\lambda_i|)+f_p^{[2]}(|\lambda_j|,|\lambda_i|,|\lambda_j|).
	\end{equation*}
	We know by Lemma \ref{lem:properties of fp} that this is true and thus finish the proof of our claim. From now on, $A$ is assumed to be positive definite. 
	
	Let $\E$ be the conditional expectation from $M_{n\times n}(\mathbb{C})$ (suppose that $A,B$ are $n\times n$ matrices) onto the unital von Neumann subalgebra generated by $B$. In particular, it is a unital completely positive trace preserving map and $\E(B)=B$. Recall that
	\begin{align*}
	\left.\dfrac{d^2}{dt^2}\right|_{t=0}\tr \left(f_p(A+tB)\right)
	=&\sum_{i}\sum_{j}\left(f_p^{[2]}(\lambda_i,\lambda_j,\lambda_i)
	+f_p^{[2]}(\lambda_j,\lambda_i,\lambda_j)\right)\tr\left(BE^A_jBE^A_i\right)\\
	=&\sum_{i}\sum_{j}F_p(\lambda_i,\lambda_j)\tr\left(BE^A_jBE^A_i\right)\\
	=&\left\langle B,Q^{A,A}_{F_p}(B)\right\rangle.
	\end{align*}
	Here $F_p$ is a function on $(0,\infty)\times(0,\infty)$ which, by \eqref{eq:r r r} and \eqref{eq:r,s}, is of the form
	\begin{equation}\label{eq:F_p}
	F_p(r,s)=f_p^{[2]}(r,s,r)+f_p^{[2]}(s,r,s)
	=\begin{cases*}
	\frac{p(r^{p-1}-s^{p-1})}{r-s}& $r\ne s$\\
	p(p-1)r^{p-2}&$r=s$
	\end{cases*}.
	\end{equation} 
	So $F_{p}=pf_{\alpha}^{[1]}$, where $\alpha=p-1\in (0,1)$ and $f_{\alpha}=x^{\alpha},x>0$ is the function in Lemma \ref{lem:monotonicity}. Now we apply Lemma \ref{lem:monotonicity} (2) to obtain 
	\begin{equation}\label{ineq:monotonicity under conditional expectation}
	\left\langle B,Q^{A,A}_{F_p}(B)\right\rangle
	\ge \left\langle \E(B),Q^{\E(A),\E(A)}_{F_p}(\E(B))\right\rangle
	=\left\langle B,Q^{\E(A),\E(A)}_{F_p}(B)\right\rangle,
	\end{equation}
	which is nothing but 
	\begin{align}\label{ineq:2nd derivative}
	\left.\dfrac{d^2}{dt^2}\right|_{t=0}\tr \left(f_p(A+tB)\right)
	\ge &\left.\dfrac{d^2}{dt^2}\right|_{t=0}\tr \left(f_p(\E(A)+t B)\right).
	\end{align}
	Here, to prove \eqref{ineq:monotonicity under conditional expectation} one can also use the joint convexity (Lemma \ref{lem:monotonicity} (1)) of 
	\begin{equation}\label{eq:two variable function convex}
	(A,B)\mapsto \left\langle B,Q^{A,A}_{F_p}(B)\right\rangle,
	\end{equation}
	since the conditional expectation $\E$ is of the form
	\begin{equation*}
	\E(X)=\sum_{i=1}^{m}\alpha_iU_i^*XU_i,
	\end{equation*}
	where $m$ is some positive integer, $\alpha_i$'s are positive numbers summing up to 1, and $U_i$'s are unitaries that commute with $B$. 
	
	With \eqref{ineq:2nd derivative}, the remaining of the proof is a standard argument from \cite{BCL94}. Recall that $A$ is positive definite, so $\E(A)+tB$ is positive definite for small $|t|$ and for such $t$
	\begin{equation*}
	\tr \left(f_p(\E(A)+tB)\right)=\tr \left[\left(\E(A)+tB\right)^p\right].
	\end{equation*}
	Since $\E(A)$ commutes with $B$, we have
	\begin{equation*}
	\left.\dfrac{d^2}{dt^2}\right|_{t=0}\tr \left(f_p(\E(A)+tB)\right)
	=p(p-1)\tr \left(\E(A)^{p-2}B^2\right).
	\end{equation*}
	This, together with H\"older's inequality and the fact that $\|\E(A)\|_p\le \|A\|_p$ (since $\E$ is a conditional expectation), yields
	\begin{align*}
	\left.\dfrac{d^2}{dt^2}\right|_{t=0}\tr \left(f_p(\E(A)+tB)\right)
	\ge&p(p-1)\tr \left(\E(A)^{p-2}B^2\right)\\
	\ge& p(p-1)\|\E(A)\|_p^{p-2}\|B\|_p^2\\
	\ge&p(p-1)\|A\|_p^{p-2}\|B\|_p^2.
	\end{align*}
	This finishes the proof of \eqref{ineq:key} and thus \eqref{ineq:2-uniform convexity matrix}.
\end{proof}

\appendix

\section{Proof of Lemma \ref{lem:monotonicity} (2)}
\label{appendix}

Now let us give the proof of Lemma \ref{lem:monotonicity} (2).
\begin{proof}[Proof of Lemma \ref{lem:monotonicity} (2)]
	The desired result \eqref{ineq:monotonicity} is equivalent to
	\begin{equation}\label{ineq:f alpha}
	\beta^* Q_{f_{\alpha}^{[1]}}^{\beta(A),\beta(B)}\beta\le Q_{f_{\alpha}^{[1]}}^{A,B},
	\end{equation}
	where $\beta^*$ denotes the adjoint of $\beta$ with respect to $\langle\cdot,\cdot\rangle$. Recall that $\alpha-1\in (-1,0)$, thus we have the following integral representation
	\begin{equation}
	s^{\alpha-1}=\frac{\pi}{\sin(\alpha\pi)}\int_{0}^{\infty}\frac{t^{\alpha-1}}{t+s}dt,~~s>0.
	\end{equation}
	By the fundamental theorem of calculus and Fubini's theorem, we have
	\begin{equation}
	\begin{split}
	x^{\alpha}-y^{\alpha}
	&=\frac{1}{\alpha}\int_{y}^{x}s^{\alpha-1}ds\\
	&=\frac{\pi}{\alpha\sin(\alpha\pi)}\int_{y}^{x}\int_{0}^{\infty}\frac{t^{\alpha-1}}{t+s}dtds\\
	&=\frac{\pi}{\alpha\sin(\alpha\pi)}\int_{0}^{\infty}t^{\alpha-1}\left(\log(x+t)-\log(y+t)\right)dt.
	\end{split}
	\end{equation}
	Set $g_t(x,y):=\log(x+t)-\log(y+t)$. Then 
	\begin{equation*}
	f_{\alpha}^{[1]}(x,y)=\frac{\pi}{\alpha\sin(\alpha\pi)}\int_{0}^{\infty}t^{\alpha-1}g_{t}^{[1]}(x,y)dt,
	\end{equation*}
	and we may write $Q^{A,B}_{f_{\alpha}^{[1]}}$ as
	\begin{equation*}
	Q^{A,B}_{f_{\alpha}^{[1]}}=\frac{\pi}{\alpha\sin(\alpha\pi)}\int_{0}^{\infty}t^{\alpha-1}Q^{A,B}_{g_t^{[1]}}dt.
	\end{equation*}
	Hence to prove \eqref{ineq:f alpha} it suffices to show that for all $t>0$ and positive definite $A,B$
	\begin{equation}\label{ineq:g t}
	\beta^* Q_{g_t^{[1]}}^{\beta(A),\beta(B)}\beta\le Q_{g_t^{[1]}}^{A,B}.
	\end{equation}
	For any $t>0$, note that for $F_t(x,y):=F(x+t,y+t)$ there holds
	\begin{equation*}
	Q_{F_t}^{A,B}=Q_F^{A+t I, B+tI},
	\end{equation*}
	where $I$ is the identity matrix. Since $\beta$ is linear and unital, we have 
	\begin{equation*}
	Q_{F_t}^{\beta(A),\beta(B)}=Q_F^{\beta(A)+t I, \beta(B)+tI}=Q_F^{\beta(A+t I), \beta(B+tI)}.
	\end{equation*}
	Therefore to prove \eqref{ineq:g t}, it suffices to show  that for all positive definite $A,B$
	\begin{equation}\label{ineq:g}
	\beta^* Q_{g^{[1]}}^{\beta(A),\beta(B)}\beta\le Q_{g^{[1]}}^{A,B},
	\end{equation}
	where $g(x,y)=\log x-\log y$. This is a known result due to Hiai and Petz \cite{HP12quasi}. In fact, put
	\begin{equation*}
	J_f^{A,B}:=f(L_A R_{B}^{-1})R_B,
	\end{equation*}
	where $L_A$ and $R_B$ are the left multiplication and right multiplication operators respectively. Note that for $F(x,y):=f(xy^{-1})y$ we have
	\begin{equation}\label{eq:identification}
	Q_{F}^{A,B}=J_{f}^{A,B}.
	\end{equation}
	 Hiai and Petz proved that \cite[Theorem 5]{HP12quasi} if $h:(0,\infty)\to (0,\infty)$ is operator monotone, then 
	\begin{equation}\label{ineq: h inverse}
	\beta^* \left(J_{h}^{\beta(A),\beta(B)}\right)^{-1}\beta\le \left(J_{h}^{A,B}\right)^{-1}.
	\end{equation}
	Since $Q_{F^{-1}}^{A,B}=(Q_{F}^{A,B})^{-1}$ and
	\begin{equation*}
	(g^{[1]})^{-1}(x,y)
	=\begin{cases*}
	\frac{x-y}{\log x-\log y}=\frac{\frac{x}{y}-1}{\log\left(\frac{x}{y}\right)}\cdot y&$x\ne y$\\
	x&$x=y$
	\end{cases*},
	\end{equation*}
	we have by \eqref{eq:identification} that
	\begin{equation*}
	Q_{g^{[1]}}^{A,B}=\left(Q_{(g^{[1]})^{-1}}^{A,B}\right)^{-1}
	=\left(J_{h}^{A,B}\right)^{-1},
	\end{equation*}
	with
	\begin{equation*}
	h(x):=\begin{cases*}
	\frac{x-1}{\log x}&$x\ne 1$\\
	1&$x=1$
	\end{cases*}.
	\end{equation*}
	To finish the proof, it remains to show that $h$ is operator monotone, which can be seen from $h(x)=\int_{0}^{1}x^t dt$.
\end{proof}

\subsection*{Acknowledgements.} The research was supported by the European Union's Horizon 2020 research and innovation programme under the Marie Sk\l odowska-Curie grant agreement No. 754411. The author would like to thank Haojian Li for helpful discussions and a careful reading of the draft.

\newcommand{\etalchar}[1]{$^{#1}$}

\end{document}